\documentclass[12pt,twoside]{article}
\usepackage{amsmath}
\usepackage{amsthm}
\usepackage{amssymb}
\usepackage[margin=1in]{geometry}
\usepackage{graphicx}
\usepackage{ifpdf}
\usepackage{color}

\ifpdf
 \DeclareGraphicsExtensions{.pdf}
\else
 \DeclareGraphicsExtensions{.eps}
\fi

\title{A logarithmic estimate for the inverse source scattering problem with attenuation in a two-layered medium}
\author{ Mozhgan Nora Entekhabi \quad \quad Ajith Gunaratne }
\date{ }
\begin{document}
\maketitle
\newtheorem{theorem}{Theorem}[section]
\newtheorem{lemma}[theorem]{Lemma}
\newtheorem{corollary}[theorem]{Corollary}
\newtheorem{definition}[theorem]{Definition}
\newtheorem{proposition}[theorem]{Proposition}
\[
Department\; of\; Mathematics
\]
\[
Florida\; A\; \& \;M \; University
\]
\[
Tallahassee, \; Fl \; 32307,\; U.S.A.
\]
\[
e-mails:\;\; mozhgan.entekhabi@famu.edu, \quad  ajith.gunaratne@famu.edu
\]

{\bf Abstract.} The paper aims a logarithmic stability estimate for the inverse source problem of the one-dimensional Helmholtz equation with attenuation factor in a two layer medium. We establish a stability by using multiple frequencies at the two end points of the domain which contains the compact support of the source functions.  

\vspace{10pt}

\textbf{Keywords:} Inverse source problems, scattering theory, exact observability. 

\vspace{10pt}

\textbf{Mathematics Subject Classification(2000)}: 35R30; 35J05; 35B60; 33C10; 31A15; 76Q05; 78A46

\section{Introduction and problem formulation}
Inverse source problem arises in many areas of science. It has numerous applications to surface vibrations, acoustical and biomedical/medical imaging, antenna synthesis, geophysics, and material science (\cite{ABF, B}). It has been known that the data of the inverse source problems for Helmholtz equations with single frequency can not guarantee the uniqueness (\cite{I}, Ch.4). On the other hand, various studies, for instance in \cite{EV}, showed that the uniqueness can be regained by taking multi-frequency boundary measurement in a non-empty frequency interval $(0,K)$ noticing the analyticity of wave-field on the frequency. Because of the wide applications, these problems have attracted considerable attention. For example, In the papers \cite{CIL, EI}  sharp results were obtained in sub-domains of $\mathbb{R}^3$ and $\mathbb{R}^2$  with a possibility of handling spatially variable coefficients. An iterative/recursive algorithm was developed for recovering unknown sources in \cite{ BLT, BLT1,BLRX}. In papers \cite{ E, IL}, authors considered Helmholtz equation with damping factor. Authors in \cite{LY}, improved the stability for the source when \textcolor{red}{the} domain is a disk/ball. In \cite{J2} the uniform logarithmic stability with respect to the wave numbers for continuation of the Helmholtz equation from the unit disk onto any larger disk was studied and recently \cite{AI, IK, IL1} showed the increasing stability for continuation problems with large wave number under (pseudo) convexity conditions on the domain. We also have to mention that in \cite{EI2} inverse source \textcolor{red}{problem} was considered for classical elasticity system.\\

In particular attenuation can have various reasons and in application one of \textcolor{red}{the} fundamental reasons of poor resolution in inverse problems is a spatial decay of the signal due in part to the damping factor. The main purpose of this paper is to study the dependence of increasing stability on the constant attenuation (damping) coefficient in the inverse scattering source problems. Our result in agreement with the result of paper \cite{YL}, if the damping factor \textcolor{red}{becomes
} zero. To achieve our goal we used analytic continuation, Carleman estimates for damped wave equation and exact observability bounds for hyperbolic equations which was recently developed in \cite{CIL}. In this paper, we assume that the medium is homogeneous in the whole space. Here we try to establish \textcolor{red}{a stability estimate to
recover of the source functions for the inverse source problem for the one-dimensional Helmholtz equation in a two-layered medium with attenuation factor $\alpha$. In this paper, the damping factor is considered the same for both layers of medium. }\\

In this paper both functions $f_0 \in H^2((-1,1)), f_1 \in H^1((-1,1))$ are assumed to be zero outside our domain and $suppf_0 \cup suppf_1 \subset (-1,1) $. In this work for simplicity we used $\partial \Omega$ instead of our boundary which is $\{1,-1\}$.  We consider the following attenuated Helmholtz equation in a two-layered medium 

\begin{equation}
\label{PDE}
u'' + (k^2(x)+i\alpha k
(x)) u=-f_1 -\alpha f_0  +ikf_0, \quad x\in(-1,1),
\end{equation}
with the exponential decay at infinity condition 
\begin{equation}
\label{outgoing cond}
    |u(x)|+|u'(x)|\leq C(u)e^{\delta (u)|x|},
\end{equation}
where $\delta (u)>0$ and wave number $k$ defines as follows

\begin{equation}
\label{Wavenumber2}
    k(x)= \left\{
	\begin{array}{ll}
	k_p  & \mbox{if } x > 0 \\
	k_n & \mbox{if } x < 0,
	\end{array}
\right.
\end{equation}

Our goals are uniqueness and stability of \textcolor{red}{the functions} $ f_0, f_1 $ from the Dirichlet data. Now let  
\begin{equation}
\label{u*}
u^*(x,\kappa):=u(x,k), \quad \kappa^2 := k^2+ \alpha ki,
\end{equation}
then the equation \eqref{PDE} becomes 
\begin{equation}
\label{u*PROBLEM}
u^{*''}+\kappa ^2 u^*  =-f_1 - \alpha f_0 + i kf_0,
\end{equation}
and also we can reformulate (3) as follows
\begin{equation}
\label{Wavenumber}
    \kappa(x)= \left\{
	\begin{array}{ll}
	\kappa_p=(k_p ^2+ \alpha k_pi)^{1/2}   \\
	\kappa_n =(k_n ^2+ \alpha k_ni)^{1/2} .
	\end{array}
\right.
\end{equation}
Using standard formulation of Helmholtz equation, $\kappa_p=c_p\omega, \kappa_n=c_n\omega$, $\omega>0$ is the angular frequency and $c_p, c_n $ are constants.\\ It is well-known that equation \eqref{u*PROBLEM} has \textcolor{red}{the} following unique solution provided $f_1 \in L^2(-1,1) , f_0 \in H^1(-1,1)$: 

\begin{equation}
 u^*(x,\omega)=\int_0^1 G(x,y) ( -f_1 -\alpha f_0+ikf_0) (y)dy,
\end{equation}
where $G(x,y)$ is the Green function given as follows

\begin{equation*}
\label{GreenFunc}
   G(x,y)= \left\{
	\begin{array}{ll}
i\frac{\kappa_p-\kappa_n}{2\kappa_p(\kappa_p+\kappa_n)} e^{i\kappa_p(x+y)}+ \frac{i}{2\kappa_p}e^{i\kappa_p|x-y|,} & \mbox{if } x > 0, \\
	\frac{i}{(\kappa_p+\kappa_n)} e^{i(\kappa_py-\kappa_nx)}, & \mbox{if } x < 0,
	\end{array}
\right.
\mbox{ for }\quad   y>0,
\end{equation*}

and 
\begin{equation*}
   G(x,y)= \left\{
	\begin{array}{ll}
i\frac{\kappa_p-\kappa_n}{2\kappa_p(\kappa_p+\kappa_n)} e^{-i\kappa_p(x+y)}+ \frac{i}{2\kappa_p}e^{i\kappa_p |x-y|}, & \mbox{if } x > 0, \\
	\frac{i}{(\kappa_p+k_n)} e^{i(-\kappa_ny+\kappa_px)}, & \mbox{if } x < 0,
	\end{array}
\right.
\mbox{ for }\quad   y<0.
\end{equation*}

To see more detail for the Green function see \cite{YL}. The following logarithmic estimate states our main result.
\begin{theorem}
\label{maintheorem}
There exists a generic constant $C$ depending on the domain $(-1,1) $ such that 

\begin{equation}
\label{Istability}
\parallel f_0 \parallel_{(0)}^{2}(-1,1)+ \parallel f_1 \parallel_{(0)}^{2}(-1,1) \leq      Ce^{C\alpha ^2}\Big(\epsilon ^2+\frac{(\alpha ^2+1)M^{2}}{K^{\frac{2}{3}}E^{\frac{1}{4}}+1} \Big), 
\end{equation}
for all $u\in H^2 ((-1,1))$ solving \eqref{PDE} with $K>1$. Here 

\begin{equation*}
\label{epsilon}
\epsilon ^2  = \int _{0} ^{K} \omega ^2\big(  | u(1,\omega) |^{2} + | u(-1,\omega) |^{2} \big ) d\omega,
\end{equation*}
$E=-ln\epsilon $ and $M = \max \big \{ \parallel f_0 \parallel_{(2)}(-1,1)+ \parallel f_1 \parallel_{(1)}(-1,1), 1 \big\}$ where $\parallel . \parallel _{(l)}((-1,1))$ is the standard Sobolev norm in $H^l((-1,1))$.
\end{theorem}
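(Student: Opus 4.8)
The plan is to combine three ingredients that are by now standard in the multi-frequency inverse source literature: an analytic continuation of the boundary data from the measured band $(0,K)$ to a larger effective band, a Plancherel/observability identity that converts the full-frequency boundary integral into the source norms, and a high-frequency tail estimate coming from the a priori Sobolev bound $M$. Throughout I would work with the transformed wavenumber $\kappa$ from \eqref{u*} and use the explicit Green's-function representation given above, so that after multiplying by $\omega$ the boundary traces $\omega\,u(\pm1,\omega)$ become (layer-by-layer) Fourier transforms of fixed linear combinations of $f_0$ and $f_1$, evaluated at the two scaled frequencies $c_p\omega$ and $c_n\omega$.

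First I would set up the analytic continuation. Because $f_0,f_1$ are compactly supported in $(-1,1)$, the Green's-function representation shows that $u(\pm1,\cdot)$ extends to an analytic function of $\omega$ in a complex sector, and the decay condition \eqref{outgoing cond} together with the a priori bound $M$ controls its growth there; the shift $\kappa^2=k^2+i\alpha k$ in \eqref{u*} is what introduces factors of the form $e^{C\alpha}$ (and, after squaring in the Carleman weight, $e^{C\alpha^2}$) into these growth bounds. I would then apply a harmonic-measure (three-lines type) estimate in this sector: knowing that $\int_0^K\omega^2(|u(1,\omega)|^2+|u(-1,\omega)|^2)\,d\omega=\epsilon^2$ is small while the analytic continuation is bounded by $M$, one propagates the smallness from $(0,K)$ to a larger band $(0,\rho)$, obtaining a bound of the form $\epsilon^{2\mu}M^{2(1-\mu)}$ with an exponent $\mu=\mu(\rho,K)$ that degrades as $\rho$ grows.

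Next I would install the observability/Plancherel step. Recasting \eqref{PDE} via inverse Fourier transform as a damped wave equation with Cauchy data $f_0,f_1$, and invoking the exact observability bound of \cite{CIL} for hyperbolic equations, I would obtain
\[
\|f_0\|_{(0)}^2(-1,1)+\|f_1\|_{(0)}^2(-1,1)\le Ce^{C\alpha^2}\int_0^\infty \omega^2\big(|u(1,\omega)|^2+|u(-1,\omega)|^2\big)\,d\omega,
\]
where the constant $Ce^{C\alpha^2}$ records the dependence of the Carleman weight on the damping factor $\alpha$. Splitting the integral at the effective frequency $\rho$, the tail $\int_\rho^\infty$ is controlled by $C(\alpha^2+1)M^2/\rho$ using the decay of the Fourier transforms of $f_0\in H^2$ and $f_1\in H^1$, the factor $\alpha^2+1$ arising from the lower-order terms $-\alpha f_0+ikf_0$ on the right-hand side of \eqref{PDE}, while the low part $\int_0^\rho$ is estimated by the analytic-continuation bound of the previous paragraph.

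Finally I would optimize over $\rho$. Balancing the continuation error against the tail term $M^2/\rho$, and tracking how $\mu(\rho,K)$ decays, forces the effective band to scale like $\rho\sim K^{2/3}E^{1/4}$ with $E=-\ln\epsilon$, which is exactly what produces the denominator $K^{2/3}E^{1/4}+1$ in \eqref{Istability}; collecting the constants then gives the stated estimate, and letting $\alpha\to0$ recovers the bound of \cite{YL}. I expect the main obstacle to be the analytic-continuation step: obtaining the sharp relation $\rho\sim K^{2/3}E^{1/4}$ requires a careful harmonic-measure estimate in the sector together with precise bookkeeping of the $\alpha$-dependent growth bounds, since a crude estimate would either lose the power of $K$ or spoil the clean $e^{C\alpha^2}$ constant.
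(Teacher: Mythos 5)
Your proposal follows essentially the same route as the paper: analytic continuation of the frequency integral via a harmonic-measure (two-constants) estimate in a sector, the exact observability bound for the damped wave equation (with its $e^{C\alpha^2}$ Carleman constant) combined with Parseval to convert the full-frequency boundary integral into the source norms, a high-frequency tail of order $(\alpha^2+1)M^2/\rho$ obtained from the $H^2\times H^1$ regularity of $(f_0,f_1)$, and the balancing choice $\rho\sim K^{2/3}E^{1/4}$. All the key ingredients and their roles match the paper's proof, so the plan is sound as stated.
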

\textbf{Remark 1.1.} Estimate \eqref{Istability} implies that for any fixed $ \alpha $, \textcolor{red}{the} stability of $f_1, f_0$ from the boundary data is improving with growing $k$. \textcolor{red}{In another words}, the problem becomes more stable when higher frequency data \textcolor{red}{is used}, but it also implies that larger attenuation will deteriorate this improvement. The right hand-side of  \textcolor{red}{the} estimate \eqref{Istability} consists of two parts: data discrepancy and the high frequency tail. There \textcolor{red}{is} some numerical evidence that when $K$ grows,  \textcolor{red}{the} functions $f_0, f_1$ will have better resolution. In addition, our estimate is a proof for the uniqueness of the inverse source functions as $\epsilon \rightarrow 0$.

\section{Increasing Stability of Continuation to higher frequencies}
To proof our main  \textcolor{red}{theorem}, let's define the following  functions:

\begin{center}

\begin{equation*}
f_{1p} = \begin{cases}
            	f_1  & \mbox{if } x > 0, \\
0 & \mbox{if } x < 0,
       \end{cases} \quad
f_{1n} = \begin{cases}
            0  & \text{if } x > 0, \\
            f_1 & \text{if } x < 0,
       \end{cases}
\end{equation*}
\begin{equation*}
f_{0p} = \begin{cases}
            	f_0  & \mbox{if } x > 0, \\
	0& \mbox{if } x < 0,
       \end{cases} \quad
f_{0n} = \begin{cases}
             0  & \text{if } x > 0, \\
          f_0  & \text{if } x < 0.
       \end{cases}
\end{equation*}
 \end{center}
\label{S:1}
And also defining  

\begin{equation*}
    I(k)=I_1(k)+I_2(k)
\end{equation*}
where 
\small{
\begin{equation*}
\label{I1-I2}
I_1(k)= \int _{0} ^{k} \omega ^2 | u(-1,\omega) |^{2} d\omega, \quad  I_2 (k)= \int _{0}^{k}  \omega ^2 | u(1,\omega) |^{2} d\omega,
\end{equation*}
using \eqref{u*}, we can show that 
\begin{equation}
\label{omegau-1}
    \omega u(-1,\omega)=\int_{0}^{1}\frac{i}{c_1 + c_2}e^{i(c_1\omega y+c_2 \omega)} ( -f_{1p} -\alpha f_{0p}+ikf_{0p}) (y)dy 
\end{equation}
\begin{equation*}
   + \int_{-1}^{0}\frac{i(c_2-c_1)}{2c_2(c_1 + c_2)}e^{-ic_2\omega (-1+y)} ( -f_{1n} -\alpha f_{0n}+ikf_{0n}) (y)dy 
\end{equation*}

\begin{equation*}
   + \int_{-1}^{0}\frac{i}{2c_2}e^{-ic_2\omega (-1-y)} ( -f_{1n} -\alpha f_{0n}+ikf_{0n}) (y)dy
\end{equation*}
and 

\begin{equation}
\label{omegau+1}
    \omega u(1,\omega)=\int_{0}^{1}\frac{i(c_1-c_2)}{2c_1(c_1 +c_2)}e^{ic_1 \omega(1+y)} ( -f_{1p} -\alpha f_{0p}+ikf_{0p}) (y)dy 
\end{equation}
\begin{equation*}
   + \int_{0}^{1}\frac{i}{2c_1 }e^{ic_2\omega (1-y)} ( -f_{1p} -\alpha f_{0p}+ikf_{0p}) (y)dy 
\end{equation*}

\begin{equation*}
   + \int_{-1}^{0}\frac{i}{c_1+c_2}e^{i(-c_2\omega y+ c_1 \omega)} ( -f_{1n} -\alpha f_{0n}+ikf_{0n}) (y)dy.
\end{equation*}

}
Functions $I_1$ and $I_2$ are both analytic with respect to the wave number $k \in \mathbb{C}\backslash [0, -i\alpha]  $ and play important roles in relating the inverse source problems of the Helmholtz equation and the Cauchy problems for the wave equations. \\
\begin{lemma}
Let $supp f_0 , supp f_1 \in (-1,1)$ and $f_0 \in H^0 (-1,1) $, $f_1 \in H^0 (-1,1) $. Then 
\begin{equation}
\label{boundI1}
|I_1(k)|\leq C\Big( |k| \parallel f_1 \parallel_{(0)}^2 (-1,1) +( |k|\alpha ^2 + \frac{|k|^3}{3})\parallel f_0 \parallel_{(0)}^2 (-1,1) \Big)e^{4c_{max}\big (4k_{1} + \alpha\big )},
\end{equation}
\begin{equation}
\label{boundI2}
|I_2(k)|\leq C\Big( |k| \parallel f_1 \parallel_{(0)}^2 (-1,1) +( |k|\alpha^2 + \frac{|k|^3}{3})\parallel f_0 \parallel_{(0)}^2 (-1,1) \Big)e^{4c_{max}\big (4k_{1} + \alpha \big )},
\end{equation}

\end{lemma}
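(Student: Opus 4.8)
The plan is to prove \eqref{boundI1} directly from the representation \eqref{omegau-1}, and then obtain \eqref{boundI2} by the identical argument applied to \eqref{omegau+1}. Since $\omega^2|u(-1,\omega)|^2 = |\omega u(-1,\omega)|^2$, I would first rewrite
\begin{equation*}
I_1(k) = \int_0^k |\omega u(-1,\omega)|^2\, d\omega,
\end{equation*}
estimate the integrand $|\omega u(-1,\omega)|^2$ uniformly in $\omega$, and then integrate over $(0,k)$.

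I would bound $|\omega u(-1,\omega)|$ using \eqref{omegau-1}, which is a sum of three integrals: one over $(0,1)$ against $f_{1p},f_{0p}$ and two over $(-1,0)$ against $f_{1n},f_{0n}$. The coefficients $\tfrac{i}{c_1+c_2}$, $\tfrac{i(c_2-c_1)}{2c_2(c_1+c_2)}$, $\tfrac{i}{2c_2}$ are constants determined by $c_1,c_2$ and are harmless; the essential work is to control the complex exponentials $e^{i(c_1\omega y + c_2\omega)}$, $e^{-ic_2\omega(-1+y)}$, $e^{-ic_2\omega(-1-y)}$. Taking moduli, each such factor equals $e^{-\mathrm{Im}(\cdot)}$, and since the spatial arguments lie in $[-1,1]$ (so the relevant distances are $\le 2$) while the imaginary part of $c_j\omega$ — equivalently of $\kappa=(k^2+i\alpha k)^{1/2}$ — is controlled linearly by $\alpha$ and by $k_1$, the imaginary part of the complex wave number $k$, all three factors are bounded uniformly in $y\in(-1,1)$ and in $\omega$ so that, after squaring, they produce the single constant $e^{4c_{max}(4k_1+\alpha)}$ with $c_{max}=\max\{c_1,c_2\}$. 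I would pull this factor out of every integral.

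Next I would estimate the remaining integrals by Cauchy--Schwarz on the unit-length intervals $(0,1)$ and $(-1,0)$. Using $|{-f_1-\alpha f_0 + ik f_0}|^2 \le 3(|f_1|^2 + \alpha^2|f_0|^2 + |k|^2|f_0|^2)$ together with $supp f_0, supp f_1 \subset (-1,1)$, each term contributes $\|f_1\|_{(0)}^2 + \alpha^2\|f_0\|_{(0)}^2 + \omega^2\|f_0\|_{(0)}^2$ up to a constant, where the factor $ik$ multiplying $f_0$ has been identified with the running frequency $\omega$ through the relation $\kappa=c\omega$, $\kappa^2=k^2+i\alpha k$. This yields
\begin{equation*}
|\omega u(-1,\omega)|^2 \le C e^{4c_{max}(4k_1+\alpha)}\big(\|f_1\|_{(0)}^2 + \alpha^2\|f_0\|_{(0)}^2 + \omega^2\|f_0\|_{(0)}^2\big).
\end{equation*}
Integrating in $\omega$ over $(0,|k|)$ and using $\int_0^{|k|}\omega^2\, d\omega = \tfrac{|k|^3}{3}$ produces exactly the three terms $|k|\,\|f_1\|_{(0)}^2$, $|k|\alpha^2\|f_0\|_{(0)}^2$ and $\tfrac{|k|^3}{3}\|f_0\|_{(0)}^2$, giving \eqref{boundI1}; the estimate \eqref{boundI2} follows verbatim from \eqref{omegau+1}.

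I expect the main obstacle to be the uniform control of the exponential factor: one must verify that $\mathrm{Im}(c_j\omega)$ (equivalently $\mathrm{Im}\,\kappa$ along the chosen path in $\mathbb{C}\setminus[0,-i\alpha]$) is genuinely dominated by a linear expression in $\alpha$ and $k_1$ over the whole integration range, since this is precisely what converts the oscillatory kernels into the clean constant $e^{4c_{max}(4k_1+\alpha)}$. Arranging the dependence on $c_1,c_2$ so that all three phases share a single such bound, and justifying the replacement of $|k|$ by $\omega$ in the $ik f_0$ term so that the $\omega$-integration delivers the factor $\tfrac13$, are the delicate points; the algebraic estimates themselves are routine applications of the triangle and Cauchy--Schwarz inequalities.
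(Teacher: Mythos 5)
Your plan coincides with the paper's own proof in all essentials: start from the representations \eqref{omegau-1}, \eqref{omegau+1}, pull a uniform bound on the complex exponentials out of the integrals, control the source term $-f_1-\alpha f_0+ikf_0$ by the triangle and Cauchy--Schwarz inequalities (with the $ikf_0$ term read as $\omega f_0$ along the integration), and integrate in $\omega$ to produce exactly the three contributions $|k|\,\|f_1\|_{(0)}^2$, $|k|\alpha^2\|f_0\|_{(0)}^2$ and $\tfrac{|k|^3}{3}\|f_0\|_{(0)}^2$. The paper does the same computation via the substitution $\omega=ks$, $s\in(0,1)$, which is just your parametrization of the segment written with the Jacobian factor $|k|$ explicit.

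Two points that you leave open are, however, precisely where the content of the lemma lies. First, the justification for "integrating over $(0,|k|)$" when $k$ is complex: $I_1(k)$ off the real axis is a contour integral, and the paper's first move is to observe that the integrands in \eqref{I1} and \eqref{I2} are analytic in $k$ on the sector $\mathbb{S}=\{|\arg k|<\pi/4\}$, so that the path from $0$ to $k$ may be chosen at will, in particular as the straight segment. Without this statement your computation is only defined for real $k$, whereas the lemma is needed for complex $k$ --- it feeds the harmonic-measure step \eqref{I1Epsilon}. Second, the estimate you defer as "the main obstacle" is resolved in the paper by an entirely elementary chain: $|\kappa_2|\le|\kappa|=|k|^{1/2}|k+\alpha i|^{1/2}\le 2k_1^{1/2}(k_1^{1/2}+\alpha)^{1/2}$ on $\mathbb{S}$, and then $2ab\le a^2+b^2$ linearizes the square roots into the exponent $4c_{max}(4k_1+\alpha)$; no finer analysis of $\mathrm{Im}\,\kappa$ along the path is required. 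Note also that in the paper $k=k_1+ik_2$ with $k_1$ the \emph{real} part of $k$, not the imaginary part as you state. On $\mathbb{S}$ one has $|k_2|\le k_1$, so your reading would actually give a stronger bound that still implies \eqref{boundI1}; but the real-part convention is the one used immediately after the lemma, where multiplication by $e^{-16c_{max}k}$ compensates the growth $e^{16c_{max}k_1}$ exactly because $k_1=\mathrm{Re}\,k$.
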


\begin{proof}
Since we have $\kappa = \kappa _1 +\kappa _2 i= \sqrt[]{k^2 +\alpha ki}$ is complex analytic on $\mathbb{C}\setminus [0,-\alpha i]$ and in particular on the set $\mathbb{S}\setminus [0,k]$, where $\mathbb{S}$ is the sector $\{|\arg k|<\pi/4 \}$ with $k=k_1 +ik_2$. It is easy to see that $|\kappa|= |k|^{\frac{1}{2}}|k+ \alpha i|^{\frac{1}{2}} \leq  2k_{1} ^{1/2}(k_{1} ^{1/2} +\alpha )^{1/2}$ and $|k|\leq \sqrt[]{2}k_1  \leq  \sqrt[]{2} |\kappa| $ for any $k$ in $\mathbb{S}$. By a simple calculation, we can show that
\begin{equation}
\label{I1}
I_1 (k)= \int _{0}^{k}\big|\int_{0}^{1}\frac{1}{c_1 + c_2}e^{i(c_1\omega y+c_2 \omega)} ( -f_{1p} -\alpha f_{0p}+ikf_{0p}) (y)dy 
\end{equation}

\begin{equation*}
   + \int_{-1}^{0}\frac{(c_2-c_1)}{2c_2(c_1 + c_2)}e^{-ic_2\omega (1+y)} ( -f_{1n} - \alpha f_{0n}+ikf_{0n}) (y)dy 
\end{equation*}

\begin{equation*}
   + \int_{-1}^{0}\frac{1}{2c_2}e^{-ic_2\omega (-1-y)} ( -f_{1n} -\alpha f_{0n}+kf_{0n}) (y)dy\big|^2d\omega
\end{equation*}
and 
\begin{equation}
\label{I2}
    I_2 (k)=\int _{0}^{k}\big| \int_{0}^{1}\frac{(c_1-c_2)}{2c_1(c_1 -c_2)}e^{i(c_1\omega y+c_2 \omega)} ( -f_{1p} -\alpha f_{0p}+kf_{0p}) (y)dy 
\end{equation}
\begin{equation*}
   + \int_{0}^{1}\frac{1}{2c_1 }e^{-ic_2\omega (1-y)} ( -f_{1p} -\alpha f_{0p}+kf_{0p}) (y)dy 
\end{equation*}

\begin{equation*}
   + \int_{-1}^{0}\frac{1}{c_1+c_2}e^{i(-c_2\omega y+ c_1 \omega)} ( -f_{1n} -\alpha f_{0n}+kf_{0n}) (y)dy\big|^2 d\omega.
\end{equation*}
Since the integrands in \eqref{I1} and \eqref{I2} are analytic functions of $k$ in $ \mathbb{S}$, their integrals with respect to $\omega$ can be taken over any path in $\mathbb{S}$ joining points $0$ and $k$ in the complex plane. \textcolor{red}{Using} the change of variable $\omega=ks$, $s\in (0,1)$ in the line integral \eqref{u*} we obtain
\begin{equation*}
 |I_1 (k) |\leq  \int_{0}^{1}|k| \big|\int_{0}^{1}\frac{1}{c_1 + c_2}e^{i(c_1\omega y+c_2 \omega)} ( -f_{1p} -\alpha f_{0p}+ikf_{0p}) (y)dy 
 \end{equation*}

\begin{equation*}
   + \int_{-1}^{0}\frac{(c_2-c_1)}{2c_2(c_1 + c_2)}e^{-ic_2\omega (1+y)} ( -f_{1n} - \alpha f_{0n}+ikf_{0n}) (y)dy 
\end{equation*}

\begin{equation*}
   + \int_{-1}^{0}\frac{1}{2c_2}e^{-ic_2\omega (-1-y)} ( -f_{1n} -\alpha f_{0n}+kf_{0n}) (y)dy\big|^2d\omega,
\end{equation*}
using the following inequalities for $y\in(-1,1)$
\begin{equation*}
    |e^{\pm i\omega(c_1y + c_2)}|\leq e^{2c_{max} |\kappa_2|}, \quad  |e^{\pm ic_2 \omega(\pm y-1 )}|\leq e^{2c_{max} |\kappa_2|},
\end{equation*}
it is  easy to drive that
\begin{equation*}
 |I_1 (k) |\leq  \int_{0}^{1}|k| \int _{-1 }^{1}  \Big(|f_{1} (y)|+(\alpha+ s|k|)|f_{o} (y)|  e^{2c_{max} |\kappa_2|} dy \Big)^2  ds,
 \end{equation*}
integrating with respect to $s$, using the bound for $|\kappa|$ in $\mathbb{S}$ and trivial inequality $2ab\leq a^2+b^2$, we complete the proof of \eqref{boundI1}.\\ 
Similarly for $y\in(-1,1)$ we have
\begin{equation*}
    |e^{\pm i\omega c_2(\pm y-1 )}|\leq e^{2c_{max} |\kappa_2|}, \quad  |e^{\pm i\omega(c_1 - c_2y )}|\leq e^{2c_{max} |\kappa_2|},
\end{equation*}
using the same technique for $I_2 (k)$, proof for \eqref{boundI2} is complete. 

\end{proof}
The following steps are essential to link the unknown $I_1(k)$ and $I_2(k)$ for $k\in [K,\infty)$ to the known value $\epsilon$ in \eqref{PDE}. Obviously

\begin{equation}
\label{Ie^}
|I_1(k) e^{-16c_{max} k}|\leq 
\end{equation}
\begin{equation*}
C\Big( |k| \parallel f_1 \parallel_{(0)}^2 (-1,1) +( |k|\alpha ^2 + \frac{|k|^3}{3})\parallel f_0 \parallel_{(0)}^2 (-1,1) \Big)e^{4c_{max} \alpha},
\end{equation*}
\begin{equation*}
\leq C\alpha ^2 e^{4c_{max}\alpha}M^{2},
\end{equation*}
with $M=\max \big \{  \parallel f_1 \parallel_{(0)}^2  (-1,1) +\parallel f_0 \parallel_{(0)}^2  (-1,1) , 1 \big \}$. With the similar argument bound \eqref{Ie^} is true for $I_2 (k)$. Observing that 

\begin{equation*}
|I_1(k)  e^{-16c_{max} k}|\leq \epsilon ^2, \quad |I_2(k)  e^{-16c_{max} k}|\leq \epsilon ^2  \textit{ on } [0, K].
\end{equation*}
Let $\mu (k) $ be the harmonic measure of the interval $[0,K]$ in $\mathbb{S}\backslash [0,K], $ then as known (for example see \cite{I}, p.67), from two previous inequalities and analyticity of the function $I_{1}(k) e^{-16c_{max} k}$ and $ I_{2}(k) e^{-16c_{max} k} $ we conclude that 
\begin{equation}
\label{I1Epsilon}
|I_1(k) e^{-16c_{max} k}|\leq  C\alpha^2 e^{4c_{max}\alpha}\epsilon ^{2 \mu(k)} M^{2},
\end{equation}
when $K<k< +\infty$. Similar arguments also yield for
\begin{equation}
\label{I2Epsilon}
|I_2(k) e^{-16c_{max} k}|\leq  C\alpha^2 e^{4c_{max}\alpha}\epsilon ^{2 \mu(k)} M^{2},
\end{equation}
hence
\begin{equation}
\label{I2Epsilon}
|I(k) e^{-16c_{max} k}|\leq  C\alpha^2 e^{4c_{max}\alpha}\epsilon ^{2 \mu(k)} M^{2}.
\end{equation}

To achieve a lower bound of the harmonic measure $\mu (k)$, we use the following technical lemma. The proof can be found in \cite{CIL}.
\begin{lemma}
If $0<k<2^{\frac{1}{4}}K$, then 
\begin{equation}
\frac{1}{2}\leq \mu(k).
\end{equation}
If $2^{\frac{1}{4}}K <k$, then
\begin{equation}
\label{harmonicbound}
\frac{1}{\pi} \Big( \big ( \frac{k}{K} \big)^{4} -1 \Big)^{\frac{-1}{2}} \leq \mu (k).
\end{equation}
\end{lemma}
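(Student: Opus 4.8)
The plan is to compute $\mu(k)$ essentially explicitly by flattening the slit sector onto a half-plane, where harmonic measure is given by the classical subtended-angle formula, and then to read off the two claimed bounds. Recall that $\mu$ is the bounded harmonic function on $\mathbb{S}\setminus[0,K]$ that equals $1$ on the slit $[0,K]$ and $0$ on the two bounding rays $\arg k=\pm\pi/4$; since harmonic measure is a conformal invariant, I may transport the entire problem under any conformal map of the domain.

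First I would apply $k\mapsto \zeta=k^4$. Because this map multiplies arguments by $4$, it sends the open sector $\{|\arg k|<\pi/4\}$ conformally and bijectively onto the slit plane $\mathbb{C}\setminus(-\infty,0]$: the two rays $\arg k=\pm\pi/4$ are carried onto the two sides of $(-\infty,0]$ (where $\mu=0$), the slit $[0,K]$ onto $[0,K^4]$ (where $\mu=1$), and the evaluation point $k$ onto $k^4$. Thus the transported problem is the harmonic measure of $[0,K^4]$ in $\mathbb{C}\setminus(-\infty,K^4]$ evaluated at $k^4$. Next I would translate by $K^4$ and take the principal square root, $\zeta\mapsto s=\sqrt{\zeta-K^4}$, which maps the shifted slit plane onto the right half-plane $\{\operatorname{Re}s>0\}$. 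Tracking boundaries, the sub-slit $[0,K^4]$ goes to the boundary segment $\{it:|t|<K^2\}$ (carrying the value $1$), the remaining part $(-\infty,0]$ goes to $\{it:|t|>K^2\}$ (carrying $0$), and the evaluation point becomes the real number $s_0=\sqrt{k^4-K^4}>0$ for $k>K$. In a half-plane the harmonic measure of a boundary segment equals the angle it subtends at the interior point divided by $\pi$, which by the symmetry of $s_0\in\mathbb{R}$ about the segment $(-iK^2,iK^2)$ gives the exact value
\begin{equation*}
\mu(k)=\frac{2}{\pi}\arctan\frac{K^2}{\sqrt{k^4-K^4}}.
\end{equation*}

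From this closed form both estimates follow by elementary calculus. At $k=2^{1/4}K$ one has $k^4-K^4=K^4$, so $\mu=\tfrac{2}{\pi}\arctan 1=\tfrac12$; since $\mu$ is decreasing in $k$ on $(K,\infty)$ and equals $1$ on the slit $0<k\le K$, the first bound $\mu(k)\ge\tfrac12$ holds throughout $0<k<2^{1/4}K$. For the second bound, set $x=\big((k/K)^4-1\big)^{-1/2}=K^2/\sqrt{k^4-K^4}$, so that $0<x<1$ precisely when $k>2^{1/4}K$; concavity of $\arctan$ on $[0,1]$ (its graph lies above the chord through $(0,0)$ and $(1,\tfrac{\pi}{4})$, giving $\arctan x\ge\tfrac{\pi}{4}x\ge\tfrac12 x$) yields $\mu(k)=\tfrac{2}{\pi}\arctan x\ge\tfrac{1}{\pi}x$, which is exactly \eqref{harmonicbound}.

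The only genuinely delicate step is the bookkeeping in the two conformal reductions: one must confirm that $k\mapsto k^4$ is a bijection of the sector onto the slit plane and correctly identify which boundary arcs inherit the values $0$ and $1$, and likewise fix the branch of the square root so that $s_0$ lands on the positive real axis while $(-iK^2,iK^2)$ is the segment carrying harmonic measure $1$. Once this geometry is pinned down, the subtended-angle formula and the arctangent inequality are routine, and the two regimes split exactly at the threshold $k=2^{1/4}K$ where $\mu=\tfrac12$.
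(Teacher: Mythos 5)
Your proposal is correct. Note, however, that the paper itself does not prove this lemma at all: it simply defers to the reference [CIL] (Cheng--Isakov--Lu, \emph{J. Differential Equations} 2016), so there is no in-paper argument to compare against. Your conformal-mapping computation is the standard route (and essentially the one used in that reference): the map $k\mapsto k^4$ takes the sector $\{|\arg k|<\pi/4\}$ onto the slit plane, the shifted principal square root $\zeta\mapsto\sqrt{\zeta-K^4}$ takes the result onto the right half-plane, and the subtended-angle formula then gives the exact value
\begin{equation*}
\mu(k)=\frac{2}{\pi}\arctan\frac{K^2}{\sqrt{k^4-K^4}}=\frac{2}{\pi}\arcsin\frac{K^2}{k^2},\qquad k>K,
\end{equation*}
from which both bounds follow: the threshold $k=2^{1/4}K$ is precisely where $\mu=\tfrac12$, monotonicity handles $K<k<2^{1/4}K$ (for $0<k\le K$ the point lies on the slit, where the boundary value of $\mu$ is $1$, so the claim is trivial there), and the chord inequality $\arctan x\ge \tfrac{\pi}{4}x\ge\tfrac12 x$ on $[0,1]$ gives \eqref{harmonicbound}. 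Your boundary bookkeeping (which side of each slit carries the value $1$ versus $0$, and the branch choice putting $s_0=\sqrt{k^4-K^4}$ on the positive real axis) checks out. What your write-up buys over the paper's bare citation is self-containedness and an exact closed form for $\mu$, which in particular shows the two bounds in the lemma are sharp up to the elementary arctangent estimate.
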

\vspace{0.5cm}
\subsubsection{Exact observability bound for wave equation with damping factor}
In order to use the bound for higher frequency, we consider the hyperbolic initial value problem
\begin{equation}
\partial^{2}_{t}U(x,t) - U''(x,t)+\alpha \partial_t U(x,t)=0 \textit{ on } (-1,1) \times (0,\infty), \, U(x,0)= f_0, \, \partial_t U(x,0)=f_1 \textit{ on } (-1,1).
\end{equation}
By assuming $\alpha =0$, the exact observability bounds for the hyperbolic equation can be found in \cite{CIL,EI}. The following theorem presents a generalized result which is of its own interest.

\begin{theorem}
Let the observation time $4(D+1) <T< 5(D+1)$. Then there exists a generic constant $C$ depending on the domain $\Omega$ such that
\begin{equation}
\parallel f_0\parallel_{(1)}^{2}(\Omega)+\parallel f_1\parallel_{(0)}^{2} (\Omega) \leq C e^{C\alpha^2} \Big( \parallel \partial_t U \parallel_{(0)}^{2}(\partial \Omega \times(0,T))+ \parallel  U \parallel_{(0)}^{2}(\partial \Omega \times(0,T)) \Big),
\end{equation}
for all $U \in H^2((-1,1)\times (0,\infty))$ solving (3.1). 
\begin{proof}
Proof is in (\cite{IL}, Lemma 3.3 and Theorem 3.1).
\end{proof}
\end{theorem}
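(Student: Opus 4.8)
\medskip
\noindent\emph{Proof strategy.}\
The plan is to obtain the damped observability bound from the already known undamped one (the case $\alpha=0$ of \cite{CIL,EI}) by treating the damping term as a lower-order perturbation inside a Carleman estimate; the cost of this perturbation is precisely the factor $e^{C\alpha^2}$. I first record the energy structure of (3.1): multiplying by $\partial_t U$ and integrating over $\Omega$ gives
\[
\frac{d}{dt}\,\frac12\int_\Omega\big(|\partial_t U|^2+|\partial_x U|^2\big)\,dx
 = -\alpha\int_\Omega|\partial_t U|^2\,dx + \big[\,\partial_x U\,\partial_t U\,\big]_{\partial\Omega},
\]
so that, up to the lateral flux, the energy is non-increasing. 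This identity is used only at the very end, to transfer an interior-time estimate back to the slice $\{t=0\}$, and it costs a factor bounded by $e^{C\alpha}$, dominated by the final constant.

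The core step is a Carleman estimate for the wave operator $P=\partial_t^2-\partial_x^2$ on $Q=\Omega\times(0,T)$ with a strongly pseudoconvex weight $e^{2\tau\phi}$, for instance $\phi(x,t)=x^2-\theta\big(t-\tfrac T2\big)^2+c$. The hypothesis $4(D+1)<T<5(D+1)$ is exactly what makes $\phi$ pseudoconvex on all of $Q$ and forces its level sets to sweep across the data slice $\{t=0\}$ before leaving through $\{t=0\}\cup\{t=T\}$, so that no contribution from the top and bottom faces survives. For the undamped equation this produces
\[
\tau\!\int_Q e^{2\tau\phi}\big(|\partial_t U|^2+|\partial_x U|^2\big)
 + \tau^3\!\int_Q e^{2\tau\phi}|U|^2
 \le C\!\int_Q e^{2\tau\phi}|PU|^2 + C\,\mathcal B_\tau,
\]
where $\mathcal B_\tau$ is supported on $\partial\Omega\times(0,T)$. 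Because $f_0,f_1$ are supported strictly inside $\Omega$ and the solution inherits the outgoing/decay behaviour of (1.2), the solution is purely outgoing across $\partial\Omega$, which ties the normal derivative to the time derivative there (roughly $\partial_x U=\mp\,\partial_t U$ at $x=\pm1$); hence $\mathcal B_\tau$ is controlled by the observed traces $\int_0^T(|\partial_t U|^2+|U|^2)\,dt$ on $\partial\Omega$ alone.

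Next I feed the damping back in through $PU=-\alpha\,\partial_t U$, so that $\int_Q e^{2\tau\phi}|PU|^2=\alpha^2\int_Q e^{2\tau\phi}|\partial_t U|^2$; this term is absorbed into the left-hand side as soon as $\tau\ge 2C\alpha^2$. The forced choice $\tau\sim\alpha^2$, together with the boundedness of $\phi$ on the compact cylinder $Q$, turns the weight ratios $\sup_Q e^{2\tau\phi}/\inf_Q e^{2\tau\phi}$ into the announced $e^{C\alpha^2}$. Restricting the left-hand side to a neighbourhood of $\{t=0\}$ where $\phi\ge\phi_0$, bounding $e^{2\tau\phi}$ from below there and from above on $\partial\Omega$, and finally using the energy identity of the first step to pass from that neighbourhood to $t=0$, yields
\[
\| f_0\|_{(1)}^{2}(\Omega)+\| f_1\|_{(0)}^{2}(\Omega)
 \le Ce^{C\alpha^2}\Big(\|\partial_t U\|_{(0)}^{2}(\partial\Omega\times(0,T))+\| U\|_{(0)}^{2}(\partial\Omega\times(0,T))\Big).
\]
(Equivalently one may first set $W=e^{\alpha t/2}U$, turning (3.1) into $\partial_t^2W-\partial_x^2W-\tfrac{\alpha^2}4W=0$ with $W(\cdot,0)=f_0$, $\partial_t W(\cdot,0)=f_1+\tfrac\alpha2 f_0$; the same absorption applies, and the extra data piece is harmless since $\|f_1\|_{(0)}^2\le 2\|f_1+\tfrac\alpha2 f_0\|_{(0)}^2+\tfrac{\alpha^2}2\|f_0\|_{(0)}^2$, the last term being swallowed by $\|f_0\|_{(1)}^2$ up to the same exponential factor.)

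The step I expect to be the main obstacle is the bookkeeping of the $\alpha$-dependence while the geometry stays fixed: one must choose $\tau$ just large enough ($\tau\sim\alpha^2$) to absorb the first-order term, check that the pseudoconvexity of $\phi$ — and hence the admissible window for $T$ — does not degrade as $\alpha$ grows, and, most delicately, verify that after all integrations by parts the only surviving boundary contributions are the observed Dirichlet trace $U$ and its time derivative $\partial_t U$ on $\partial\Omega$, the normal derivative $\partial_x U$ being eliminated through the outgoing structure of the solution rather than assumed to be measured.
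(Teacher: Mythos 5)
Your strategy is, in substance, the proof this paper relies on: the paper contains no self-contained argument for Theorem 2.3 at all, but simply cites Isakov--Lu \cite{IL} (Lemma 3.3 and Theorem 3.1), and the argument there is the one you sketch --- a Carleman estimate for the (transformed) wave operator in which the damping is absorbed by taking the Carleman parameter $\tau\sim\alpha^2$, this choice being the sole source of the factor $e^{C\alpha^2}$. Your parenthetical variant via $W=e^{\alpha t/2}U$, which turns (3.1) into $\partial_t^2W-W''-\tfrac{\alpha^2}{4}W=0$, is in fact the substitution used there, and the paper's concluding remarks confirm that the quadratic dependence on $\alpha$ comes exactly from this Carleman bookkeeping.

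One step of your sketch is, however, stated in a form that is false for $\alpha>0$ rather than merely delicate: the outgoing relation $\partial_x U=\mp\,\partial_t U$ at $x=\pm1$ is exact only for the undamped equation, whose exterior solutions are traveling waves. For $\alpha>0$ the exterior solution is dispersive. Taking the Fourier transform in time (the paper's convention $\widehat U(x,k)=\int U(x,t)e^{ikt}\,dt$) in $\{x>1\}$, where $U$ has zero initial data, gives $\widehat U(x,k)=\widehat U(1,k)\,e^{i\kappa(x-1)}$ with $\kappa=\sqrt{k^2+i\alpha k}$, hence
\[
\partial_x\widehat U(1,k)=i\kappa\,\widehat U(1,k)
=-\,\widehat{\partial_t U}(1,k)-\frac{\alpha k}{\kappa+k}\,\widehat U(1,k),
\]
so the Neumann trace equals $-\partial_t U$ plus a correction which is a Fourier multiplier of modulus at most $C\alpha$ applied to the Dirichlet trace (and similarly at $x=-1$). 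The conclusion you need --- that $\partial_x U$ on $\partial\Omega\times(0,T)$ is controlled by $\parallel U\parallel_{(0)}+\parallel \partial_t U\parallel_{(0)}$ there, with a constant at worst absorbed into $Ce^{C\alpha^2}$ --- does survive, but it must be derived from this exterior analysis (or from an exterior energy estimate for the damped equation), not read off from a traveling-wave identity. You correctly flagged this as the main obstacle; the point is that the identity you wrote down is not the statement that ends up being true, only its $\alpha=0$ limit. With that step repaired, your outline matches the cited proof.
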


\section{Logarithmic stability for inverse source problem}
To proceed, we consider the hyperbolic initial value problem
\begin{equation}
\label{U0}
\partial^{2}_{t}U - U''+\alpha \partial_t U=0 \textit{ on }\mathbb{R} \times (0,\infty), \, U(x,0)= f_0, \, \partial_t U(x,0)=f_1 \textit{ on } \mathbb{R}
\end{equation}
Defining $U(x,t) =0$ for $t<0$. We claim that the solution of \eqref{PDE} coincides with the Fourier transform of $U$;

\begin{equation}
\label{uF}
u(x,k)= \int_{-\infty}^{\infty} U(x,t)e^{ikt} dt.
\end{equation}
Known results in \cite{CFZ} (see Theorem 1.1. and Theorem 1.2.), \cite{KNO} and the assumption on the functions $f_0, f_1$ imply that 
\begin{equation*}
    \parallel U(.,t)\parallel_{(0)} \leq C(f_0,f_1)(1+t)^{-\frac{1}{4}},
\end{equation*}
so the Fourier transform \eqref{uF} is well defined. To prove the claim, the following steps are essential. Defining function $u_* (x,k)$ as the right hand side of \eqref{uF};
\begin{equation}
u_*(x,k)= \frac{1}{\sqrt[]{\pi}}\int _{|x|-1}^{\infty} U(x,t)e^{ikt}dt.
\end{equation}
We observe that due to speed of the propagation, $U(x,t)=0$ when $1+tc_{max} <|x|$, $x\in (-1,1)$ (see \cite{J}).
Using integration by parts and \eqref{U0}, it is easy to see that
\begin{equation*}
0= \int ^{\infty}_{0} \big(\partial^{2}_{t}U(.,t) - U(.,t)''+\alpha \partial_t U(.,t)\big ) e^{ikt}dt
\end{equation*}
\begin{equation*}
= -\partial_t  U(.,0) - \int_{0}^{\infty} ik\partial_t  U(.,t)e^{ikt} dt- \int _{0}^{\infty}  U(.,t)''e^{ikt}dt-\alpha U(.,0)
\end{equation*}
\begin{equation*}
-\alpha ik\int^{0}_{\infty} U(.,t)e^{ikt}dt
\end{equation*}
\begin{equation*}
=-\partial_t  U(.,0)-\alpha U(.,0)+ikU(.,0)-\int_{0}^{\infty} \big ( k^2 U(.,t)+ U(.,t)''+i \alpha kU(.,t) \big)e^{ikt}dt.
\end{equation*}
All above holds when $k=k_1+ik_2$, $k_2>0$. Considering the well known integral representation of the solution $U$ when $k_2 >0$ for large $|x|$ as in \cite{CH}, p. 695 and \cite{KNO}, $U$ decays for large $t$ (Bessel functions of first kind are bounded functions). 

 Hence above bound for $U$ combined with the exponential decay of $e^{ikt}$ with respect to $t$ for $k_2>0$ implies an exponential decay of $u_*(x,k)$ when $|x|$ is getting large. By standard argument from stationary scattering theory, function $u(x,k)$ decays exponentially. Hence we conclude that $u(x,k)=u_*(x,k)$ or \eqref{uF} for all $k_2>0$. Due to the $L^2$-continuity of both side with respect to $k_2$ we conclude \eqref{uF} for $k_2=0$.\\

To proceed the estimate for reminders of the whole integrands in \eqref{I1} and \eqref{I2} for $(k, \infty)$, we need the following lemma.
\begin{lemma}
\label{BoundK-Infty}
Let $u$ be a solution to the forward problem \eqref{PDE} with $f_1 \in H^1((-1,1))$ and $f_0 \in H^2((-1,1)))$ with $supp f_0, supp f_1\subset (-1,1)$, then 
\begin{equation}
\label{lemma4.1}
\int_{k}^{\infty} \omega ^2 | u(-1,\omega) |^2 d\omega+ \int_{k}^{\infty}   \omega ^2|u(1,\omega) |^{2} d\omega 
 \end{equation}
\begin{equation*}
\label{uomega}
 \leq C k ^{-1}\Big( (1+\alpha^2) \parallel f_0 \parallel ^2  _{(2)} (-1,1)  +\parallel f_1 \parallel ^2  _{(1)} (-1,1)  \Big),
\end{equation*}

\end{lemma}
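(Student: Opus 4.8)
The plan is to read off the decay directly from the explicit boundary representations \eqref{omegau-1} and \eqref{omegau+1}, which already display $\omega u(\pm1,\omega)$ as a finite sum of oscillatory integrals of the source pieces $f_{0p},f_{0n},f_{1p},f_{1n}$. On the real frequency ray each of these integrals is, up to a bounded constant, a rescaled Fourier transform (in $\omega$) of the corresponding source extended by zero to $\mathbb{R}$. The entire lemma is then a Plancherel computation, in which the extra Sobolev regularity $f_0\in H^2$, $f_1\in H^1$ (as opposed to the mere $H^0$ used in the earlier $I_1,I_2$ bounds) is precisely what converts the high-frequency tail $\int_k^\infty$ into a negative power of $k$.

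The key observation that must be isolated first is that, in contrast to the sectorial estimate of the previous lemma, \emph{no} factor exponential in $\alpha$ is incurred on the real axis. Indeed, with $\kappa_p=c_p\omega$, $\kappa_n=c_n\omega$ and $\mathrm{Im}\,\kappa_p,\mathrm{Im}\,\kappa_n\ge 0$, every phase argument appearing in \eqref{omegau-1} and \eqref{omegau+1} is a nonnegative multiple of $\omega$ on the relevant subinterval (e.g. $1+y\ge0$, $1-y\ge0$ for $y\in(-1,0)$, and $c_1 y+c_2$ has nonnegative imaginary part for $y\in(0,1)$), so each kernel satisfies $|e^{\pm i\kappa(\cdots)}|\le 1$: the attenuation only damps the outgoing boundary field. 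Using moreover $|k|\le\sqrt2\,|\kappa|\le C\omega$, each term of $\omega u(\pm1,\omega)$ is bounded by a bounded constant times $|\widehat{f_{1\bullet}}(c\omega)|$ or $(\alpha+|k|)\,|\widehat{f_{0\bullet}}(c\omega)|$, where $\widehat{\,\cdot\,}$ is the Fourier transform of the zero-extended function.

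Next I would square $\omega u(\pm1,\omega)$, combine the finitely many terms by $2ab\le a^2+b^2$, and integrate over $[k,\infty)$ to reach
\[
\int_k^\infty \omega^2|u(\pm1,\omega)|^2\,d\omega \le C\int_k^\infty\Big(|\widehat{f_1}(c\omega)|^2+(\alpha^2+\omega^2)|\widehat{f_0}(c\omega)|^2\Big)\,d\omega .
\]
After the substitution $\eta=c\omega$ the tail-gain $\eta^{-2}\le(ck)^{-2}$ on $[ck,\infty)$ produces the decay: $\int_{ck}^\infty|\widehat{f_1}|^2\,d\eta\le(ck)^{-2}\int\eta^2|\widehat{f_1}|^2\,d\eta\le Ck^{-2}\parallel f_1\parallel_{(1)}^2$, while $\int_{ck}^\infty\eta^2|\widehat{f_0}|^2\,d\eta\le(ck)^{-2}\int\eta^4|\widehat{f_0}|^2\,d\eta\le Ck^{-2}\parallel f_0\parallel_{(2)}^2$, and the $\alpha^2$-term is handled identically with one fewer derivative, contributing $C\alpha^2 k^{-2}\parallel f_0\parallel_{(2)}^2$. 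Collecting these and replacing $k^{-2}$ by $k^{-1}$ (legitimate since $k>1$ in the regime of interest) yields the asserted bound. Here it is essential that $\mathrm{supp}\,f_0,\mathrm{supp}\,f_1\subset(-1,1)$ are compactly contained, so that the zero-extensions stay in $H^2(\mathbb{R})$ and $H^1(\mathbb{R})$ and the Plancherel identities $\int\eta^4|\widehat{f_0}|^2\sim\parallel f_0\parallel_{(2)}^2$, $\int\eta^2|\widehat{f_1}|^2\sim\parallel f_1\parallel_{(1)}^2$ hold.

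The main obstacle is the attenuation bookkeeping rather than any deep analysis: one must carefully check, kernel by kernel in \eqref{omegau-1} and \eqref{omegau+1}, that the sign of each phase argument keeps the modulus at most $1$ on the real ray (so that the clean estimate without an exponential-in-$\alpha$ factor is genuinely attained), and one must verify that the extra factor $|k|\le C\omega$ carried by the $ikf_0$ term is exactly absorbed by the single additional derivative that $f_0\in H^2$ provides over $f_1\in H^1$. Once these matchings are confirmed, the result follows from Plancherel's theorem together with the finite-speed-of-propagation support of the data.
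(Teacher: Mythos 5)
Your overall strategy runs parallel to the paper's: both arguments start from the explicit representations \eqref{omegau-1}--\eqref{omegau+1}, bound every kernel by a constant on the real frequency ray (your observation that no $e^{C\alpha}$ factor is incurred there is correct and is implicit in the paper too), and then convert one derivative of $f_1$ and two derivatives of $f_0$ into decay in $\omega$, the extra derivative on $f_0$ compensating the factor $k\sim\omega$ carried by the source term $ikf_0$. The difference is in execution: the paper integrates by parts in $y$ (once for the $f_1$-terms, twice for the $f_0$-terms) to get pointwise decay $O(\omega^{-1})$, $O(\omega^{-2})$ and then integrates the tail explicitly, whereas you stay on the Fourier side and invoke weighted Plancherel identities; your variant would even produce the rate $k^{-2}$ before you relax it to $k^{-1}$.

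There is, however, a genuine gap, and it sits exactly at the point you declared ``essential''. The functions whose Fourier transforms actually appear in \eqref{omegau-1}--\eqref{omegau+1} are not $f_0,f_1$ but the truncated pieces $f_{1p}=f_1\chi_{(0,1)}$, $f_{0p}=f_0\chi_{(0,1)}$, $f_{1n}$, $f_{0n}$; each piece enters with its own scaling $c_1\omega$ or $c_2\omega$, so they cannot be merged into transforms of the full sources --- note that your displayed inequality silently replaces $\widehat{f_{1\bullet}},\widehat{f_{0\bullet}}$ by $\widehat{f_{1}},\widehat{f_{0}}$. The hypothesis $\mathrm{supp}\,f_j\subset(-1,1)$ forces vanishing near $\pm1$ but imposes nothing at the cut $y=0$; generically $f_1(0)\neq 0$ and $f_0(0)\neq 0$, so the zero-extended pieces have a jump at $0$, do not lie in $H^1(\mathbb{R})$ (let alone $H^2(\mathbb{R})$), and the weighted integrals $\int\eta^2|\widehat{f_{1p}}|^2d\eta$ and $\int\eta^4|\widehat{f_{0p}}|^2d\eta$ that you need are infinite, making your key inequality vacuous. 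To be fair, the paper's own proof commits the same sin: its integration-by-parts identities hold only if the pieces vanish at $y=0$ (the phrase ``$f_0$ and $f_1$ are vanished at the endpoints'' is applied to the interval $(0,1)$, whose endpoint $0$ is interior to the supports), so the interface boundary terms are silently dropped. The two omissions are not equally repairable. For the $f_1$- and $\alpha f_0$-terms, keeping the interface term costs only $|f_j(0)|/(c\omega)\le C\|f_j\|_{(1)}/\omega$ by the one-dimensional Sobolev embedding, so the claimed $k^{-1}$ bound survives (though via pointwise decay, not via Plancherel, whose weighted integral is genuinely infinite). But for the $ikf_0$-terms, which carry the extra factor $\sim\omega$, the interface contribution to $\omega u(-1,\omega)$ is $O(1)$: summing the boundary terms at $y=0$ produced by the three kernels of \eqref{omegau-1}, one finds that $\limsup_{\omega\to\infty}|\omega u(-1,\omega)|$ is a positive multiple of $|c_1-c_2|\,|f_0(0)|$, with no cancellation in a genuinely two-layered medium $c_1\neq c_2$. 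In that case $\int_k^\infty\omega^2|u(-1,\omega)|^2d\omega$ diverges, so neither your argument nor the paper's can be closed as written: the lemma requires the additional (unstated) hypothesis that $f_0$ vanishes at the interface, e.g. $f_0(0)=0$ or $0\notin\mathrm{supp}\,f_0$, and no refinement of the estimates can substitute for it.
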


\begin{proof}
Using \eqref{omegau-1} and \eqref{omegau+1}, we obtain
\begin{equation}
\label{Line1lemma4.1}
   \int _{k}^{\infty} \omega ^2 | u(-1,\omega) |^{2}d\omega + \int _{k}^{\infty} \omega ^2 | u(1,\omega) |^{2}d\omega
\end{equation}
\begin{equation*}
    \leq  \int _{k}^{\infty}\Big | \int _{0}^{1} e^{ic_1 \omega y} ( -f_{1p} -\alpha f_{0p}+kf_{0p}) (y)dy  \Big|^2d\omega + \int _{k}^{\infty}\Big | \int _{0}^{1} e^{-ic_1 \omega y} ( -f_{1p} -\alpha f_{0p}+k f_{0p}) (y)dy  \Big|^2d\omega 
\end{equation*}
\begin{equation*}
  +  \int _{k}^{\infty}\Big | \int _{-1}^{0} e^{ic_2 \omega y} ( -f_{1n} -\alpha f_{0n}+kf_{0n}) (y)dy  \Big|^2d\omega + \int _{k}^{\infty}\Big | \int _{-1}^{0} e^{-ic_2 \omega y} ( -f_{1n} -\alpha f_{0n}+kf_{0n}) (y)dy,  \Big|^2d\omega 
\end{equation*}

\begin{equation}
\label{Line2lemma4.1}
    \leq \int _{k}^{\infty}\Big | \int _{0}^{1} e^{ic_1 \omega y}  f_{1p} (y)dy  \Big|^2d\omega + \int _{k}^{\infty}\Big | \int _{0}^{1} (\alpha +k)e^{ic_1 \omega y}f_{0p} (y)dy  \Big|^2d\omega 
\end{equation}
\begin{equation*}
   + \int _{k}^{\infty}\Big | \int _{0}^{1} e^{-ic_1 \omega y}  f_{1p} (y)dy  \Big|^2d\omega + \int _{k}^{\infty}\Big | \int _{0}^{1} (\alpha +k)e^{-ic_1 \omega y}f_{0p} (y)dy  \Big|^2d\omega 
\end{equation*}
\begin{equation*}
    + \int _{k}^{\infty}\Big | \int _{-1}^{0} e^{ic_2 \omega y}  f_{1n} (y)dy  \Big|^2d\omega + \int _{k}^{\infty}\Big | \int _{-1}^{0} (\alpha +k)e^{ic_2 \omega y}f_{0n} (y)dy  \Big|^2d\omega 
\end{equation*}
\begin{equation*}
   + \int _{k}^{\infty}\Big | \int _{-1}^{0} e^{-ic_2 \omega y}  f_{1n} (y)dy  \Big|^2d\omega + \int _{k}^{\infty}\Big | \int _{-1}^{0} (\alpha +k)e^{-ic_2 \omega y}f_{0n} (y)dy  \Big|^2d\omega.
\end{equation*}
Using integration by parts and the fact that $f_0$ and $f_1$ are vanished at the endpoints, we have
\begin{equation*}
\int _{0}^{1} e^{\pm ic_1 \omega y}  f_{1p} (y)dy = \frac{1}{\pm ic_1 \omega} \int _{0}^{1}e^{\pm ic_1 \omega y}f_{1p} '(y)dy  
\end{equation*}
\begin{equation*}
(\alpha+k)\int _{0}^{1}  e^{\pm ic_1 \omega y}  f_{0p} (y)dy = \frac{(\alpha+k)}{(\pm ic_1 \omega)^2}\int _{0}^{1} e^{\pm ic_1 \omega y}f_{0p}'' (y)dy  
\end{equation*}
\begin{equation*}
\int _{-1}^{0} e^{\pm ic_2 \omega y}  f_{1n} (y)dy = \frac{1}{\pm ic_2 \omega} \int _{-1}^{0}e^{\pm ic_2 \omega y}f_{1n} '(y)dy  
\end{equation*}
\begin{equation*}
(\alpha +k)\int _{-1}^{0}  e^{\pm ic_2 \omega y}  f_{0n} (y)dy = \frac{(\alpha+k)}{(\pm ic_2 \omega)^2}\int _{-1}^{0} e^{\pm ic_2 \omega y}f_{0n}'' (y)dy , 
\end{equation*}
consequently for the first and second terms in \eqref{Line2lemma4.1} we obtain
\begin{equation*}
   \Big | \int _{0}^{1} e^{\pm ic_1 \omega y}  f_{1p} (y)dy  \Big|^2 \leq  \frac{C}{\omega^2}\parallel f_{1p}\parallel^2 _{(1)} (0,1) \leq  \frac{C}{\omega^2} \parallel f_{1p}\parallel^2 _{(1)} (-1,1)
\end{equation*}
and 
\begin{equation*}
    \Big | \int _{0}^{1} (\alpha +k)e^{ic_1 \omega y}f_{0p} (y)dy  \Big|^2 \leq  \frac{C (\alpha ^2+k^2)}{\omega^4}\parallel f_{0p}\parallel^2 _{(2)} (0,1) \leq  \frac{C (\alpha ^2+k^2)}{\omega^4}\parallel f_{0p}\parallel^2 _{(2)} (-1,1),
\end{equation*}
repeating the argument for the other terms in \eqref{Line2lemma4.1} and integrating with respect to $\omega$ the proof is complete.

\end{proof}

\textbf{Remark 2.1.} Obviously, the following inequality holds
\begin{equation*}
\int _{k}^{\infty} \omega ^2 \parallel u(,\omega) \parallel_{(0)}^{2} (\partial \Omega)d\omega \leq k^{-2} \int _{k<|\omega|} \omega ^4  \parallel u(,\omega) \parallel_{(0)}^{2} (\partial \Omega)d\omega \leq
\end{equation*}
\begin{equation*}
 k^{-2} \int _{R} \omega ^4  \parallel u(,\omega) \parallel_{(0)}^{2} (\partial \Omega)d\omega=2\pi k^{-2} \int _{R} \parallel \partial ^{2 }_{t} U(,t) \parallel_{(0)}^{2} (\partial \Omega)dt
\end{equation*}
by the Parseval's identity.\\

Finally, we are ready to establish the  increasing stability estimate of Theorem \ref{maintheorem}\\

{\it Proof of Theorem 1.1.}

\begin{proof}
Without loss of generality, we can assume that $\epsilon <1$ and $3\pi E^{-\frac{1}{4}} <1$, otherwise the bound \eqref{maintheorem} is obvious. Let 
\begin{center}
\begin{equation}
\label{k}
k= \begin{cases}
          K^{\frac{2}{3}}E^{\frac{1}{4}} \quad \text{if} \quad 2^{\frac{1}{4}}K^{\frac{1}{3}}< E ^{\frac{1}{4}} \\
K \hspace{1.19 cm} \text{if}\quad E ^{\frac{1}{4}} \leq 2^{\frac{1}{4}}K^{\frac{1}{3}},
       \end{cases} 
\end{equation}
 \end{center}
if $ E ^{\frac{1}{4}} \leq 2^{\frac{1}{4}}K^{\frac{1}{3}}$, then $k=K$ and 
\begin{equation}
\label{I11}
|I_1 (k)| \leq 2\epsilon ^2.
\end{equation}

If $2^{\frac{1}{4}}K^{\frac{1}{3}}< E ^{\frac{1}{4}}$, 
we can assume that $ E^{-\frac{1}{4}} <\frac{1}{4 \pi}$, otherwise $C<E$ and hence $K<C$ and the bound \eqref{Istability} is straightforward. From \eqref{k}, \eqref{harmonicbound},  Lemma 2.2, \eqref{I1Epsilon} and the equality $\epsilon=e^{-E}$ we obtain
\begin{equation*}
\label{I1epsilon}
|I_1(k) |\leq  CM^{2}  \alpha^2 e^{4\alpha} e^{4k}  e^{\frac{-2E}{\pi}\big( (\frac{k}{K})^4  -1 \big)^{\frac{-1}{2}}} 
\end{equation*}

\begin{equation*}
\leq CM^{2}e^{4\alpha}  \alpha^2 e^{- \frac{2}{\pi}K^{\frac{2}{3}}E^{\frac{1}{2}}(1- \frac{5\pi}{2}   E^{\frac{-1}{4}} )},
\end{equation*}
using the trivial inequality $e^{-t} \leq \frac{6}{t^3}$ for $t>0$ and the assumption at the beginning of the proof, we conclude that 
\begin{equation}
\label{I12}
|I_1(k)|\leq  CM_{0}^{2}  \alpha^2 e^{2\alpha}  \frac{1}{K^2 E ^{\frac{3}{2}}\Big (1-\frac{5 \pi}{2} E^{-\frac{1}{4}}   \Big)^3}.  
\end{equation}
Using \eqref{I1}, \eqref{I11}, \eqref{I12}, and Lemma 4.1 we obtain 
\begin{equation}
\label{lastbound1}
\int ^{+\infty}_{0} \omega ^2 |u(-1,\omega)|^2 d\omega + \int ^{+\infty}_{0} \omega ^2 |u(1,\omega)|^2 d\omega
\end{equation}
\begin{equation*}
    \leq I(k)+  \int_{k}^{\infty} \omega ^2 |u(-1,\omega)|^2 d\omega + \int_{k}^{\infty} \omega ^2 |u(1,\omega)|^2 d\omega
\end{equation*}
\begin{equation*}
\leq 2\epsilon ^2 + C  \Big(  \frac{  (\alpha^2+1)  M^{2}  e^{4\alpha}}{K^2 E ^{\frac{3}{2}}} + \frac{ ( \alpha^2 +1)\parallel f_0 \parallel_{(2)}^2 + \parallel f_1 \parallel_{(1)}^2 }{K^{\frac{2}{3}}E^{\frac{1}{4}}+1} \Big).
\end{equation*}
Using \eqref{lastbound1} and Theorem 3.1, we finally obtain

\begin{equation*}
\parallel f_1 \parallel _{(0)} ^2 (\Omega) +\parallel f_0 \parallel _{(1)} ^2 (\Omega) \leq C e^{\alpha^2}\Big ( \parallel \partial _t U  \parallel _{(0)} ^2 (\partial \Omega \times (0,T))+ \parallel  U \parallel _{(0)} ^2 (\partial \Omega \times (0,T))\Big )
\end{equation*}
\begin{equation*}
\leq  C e^{\alpha^2}\Big ( \parallel \partial _t U  \parallel _{(0)} ^2 (\partial \Omega \times (0,\infty))+ \parallel U \parallel _{(0)} ^2 (\partial \Omega \times (0,\infty))      \Big )
\end{equation*}
\begin{equation*}
\leq C e^{\alpha^2} \Big( \epsilon ^2 +   \frac{ (\alpha^2+1)   M^{2}  e^{8\alpha}}{K^2 E ^{\frac{3}{2}}} + \frac{ ( \alpha^2 +1)\parallel f_0 \parallel_{(2)}^2 + \parallel f_1 \parallel_{(1)}^2 }{K^{\frac{2}{3}}E^{\frac{1}{4}}+1} \Big),
\end{equation*}
due to the Parseval's identity. Since $ K^{\frac{2}{3}} E ^{\frac{1}{4}}<K^2 E ^{\frac{3}{2}}$ for $1<K, 1<E$, the proof is complete.

\end{proof}

\section{Concluding Remarks} In this paper, we studied the inverse source scattering problem with attenuation and many frequencies of the one-dimensional Helmholtz equation in a two-layered medium using multi-frequency
Dirichlet data at the two end points of an interval which contains the compact support of the source. Our results showed a deterioration of stability with growing attenuation/damping constant $\alpha $.\\

Due to the $\alpha$ and term $e^{C\alpha}$, the result of theorem 1.1 is not sufficiently sharp. The quadratic dependence on $\alpha$ in \eqref{Istability} is a consequence of  Carleman estimates for the hyperbolic equation to prove Theorem 2.3. In particular, we used Carleman estimates to trace the dependence of exact observability bounds on the factor $\alpha$. In \cite{IL}, they provided numerical \textcolor{red}{evidence} which \textcolor{red}{agreed} with our result.\\

\section{ Acknowledgment:} 

 This research is supported in part by NSF Award HRD-1824267.

\end{document}